\documentclass[12pt,amstex,amssymb]{amsart}
\usepackage{amsmath,amsthm,amsfonts,amssymb,amscd}
\usepackage[latin1]{inputenc}
\usepackage[all]{xy}
\usepackage[dvips]{graphicx}
\usepackage{amsmath}
\usepackage{amsthm}
\usepackage{amsfonts}
\usepackage{amssymb}

\textwidth=5.35in
\textheight=8.3in

\newtheorem{Theorem}{Theorem}[section]
\newtheorem{Lemma}[Theorem]{Lemma}

\newtheorem{Proposition}[Theorem]{Proposition}

\def\fa{{\mathcal{F}}}

\def\vr{{\varphi}}

\def\ov{\overline}

\def\bc{{\mathbb{C}}}
\def\bn{{\mathbb{N}}}

\def\GL{\operatorname{{GL}}}

\def\dim{\operatorname{{dim}}}

\def\Diff{\operatorname{{Diff}}}

\def\Hol{\operatorname{{Hol}}}

\def\Id{\operatorname{{Id}}}

\def\Diff{\operatorname{{Diff}}}

\def\SO(3){\operatorname{{SO(3)}}}

\def\bc{\operatorname{{\mathbb C}}}
\def\C{\operatorname{{\mathbb C}}}
\def\R{\operatorname{{\mathbb R}}}
\def\Hol{\operatorname{{Hol}}}
\def\fa{\operatorname{{\mathcal F}}}
\def\Diff{\operatorname{{Diff}}}

\def\ov\bc{\operatorname{\overline{\mathbb{C}}}}

\title[A measurable stability theorem]{A measurable stability theorem for  holomorphic foliations transverse to fibrations}

\begin{document}

\author{Bruno Scardua}
\address{B. Scardua. Instituto de Matematica - Universidade Federal do Rio de Janeiro,
CP. 68530-Rio de Janeiro-RJ, 21945-970 - Brazil}
\email{scardua@im.ufrj.br}

\maketitle

\begin{abstract}
We prove that a transversely holomorphic foliation which is
transverse to the fibers of a  fibration, is  a Seifert fibration if
 the set of compact leaves is
not of zero measure. Similarly, we prove that a finitely generated
 subgroup of holomorphic diffeomorphisms of a connected complex
 manifold, is finite provided that the set of periodic orbits is
not of zero measure.
 \end{abstract}

\section{Introduction}
Foliations transverse to fibrations are among the very first and
simplest constructible examples of foliations, accompanied by a
well-known transverse structure\footnote{2000 Mathematics Subject
Classification: Primary 32S65, 57R30, 58E05; Secondary 32M05.}. These foliations are suspensions of
groups of diffeomorphisms and their behavior is closely related to
the action of the group in the fiber\footnote{Key words and phrases:  Holomorphic foliation, global holonomy, stable leaf.}. For these reasons, many
results holding for foliations in a more general context, are first
established for suspensions, i.e., foliations transverse to a
fibration. In this paper we pursue this idea, but  not restricted to
it. We investigate versions of the classical Stability theorems of Reeb (\cite{camacho, Godbillon}), regarding the behavior of the foliation in a neighborhood of a compact leaf,
replacing the finiteness of the holonomy group of the leaf by the
existence of a sufficient number of compact leaves. This is done for transversely holomorphic
(or transversely analytic) foliations.

  Let $\eta=(E, \pi, B,
F)$ be a (locally trivial) fibration with total space $E$, fiber $F$, base $B$
and projection $\pi\colon E \to B$. A foliation $\fa$ on $E$ is
{\it transverse to $\eta$} if: (1) for each $p \in E,$ the leaf
$L_p$ of $\fa$ with $p\in L_p$ is transverse to the fiber
$\pi^{-1}(q),$ $q=\pi(p);$ (2) $\dim(\fa) + \dim(F) = \dim(E);$
and (3) for each leaf $L$ of $\fa,$ the restriction $\pi|_L: L \to
B$ is a covering map. A theorem of Ehresmann (\cite{camacho} Ch. V)
\cite{Godbillon}) assures that if the fiber $F$ is compact, then conditions
(1) and (2) together already imply (3). Such foliations are conjugate to
suspensions and are characterized by their  {\it global holonomy}
(\cite{camacho}, Theorem 3, p. 103 and \cite{Godbillon}, Theorem 6.1, page 59).

The codimension one case is studied in  \cite{scardua1}.
In  \cite{scardua2} we study the case where the
ambient manifold is a hyperbolic complex manifold. In \cite{Fabio-Scardua} the authors prove
a natural version of the stability theorem of Reeb for (transversely holomorphic) foliations transverse to fibrations.  A foliation $\fa$ on $M$ is called a
{\it Seifert fibration} if all leaves are compact with finite
holonomy groups. \vglue.1in

The following stability theorem is proved in \cite{Fabio-Scardua}:

\begin{Theorem}
\label{Theorem:stabilitygeneralI} Let $\fa$ be a holomorphic
foliation transverse to a fibration $\pi\colon E
\overset{F}{\longrightarrow}~B$ with  fiber $F$.  If $\fa$ has a
compact leaf with finite holonomy group then $\fa$ is a Seifert
fibration.
\end{Theorem}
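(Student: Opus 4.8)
The plan is to reduce the statement to a property of the global holonomy group and then exploit the rigidity of holomorphic maps on a connected manifold. First I would fix the compact leaf $L_{0}$ with finite holonomy and set up the suspension picture: since $\pi|_{L_{0}}\colon L_{0}\to B$ is a covering map and $L_{0}$ is compact, the base $B$ is compact and this covering is finite, so by the structure theory for foliations transverse to fibrations (\cite{camacho}, Theorem 3, p. 103; \cite{Godbillon}, Theorem 6.1) the foliation $\fa$ is holomorphically conjugate to the suspension of its global holonomy representation $\phi\colon\pi_{1}(B,b_{0})\to \Aut(F)$, where $\Aut(F)$ is the group of holomorphic diffeomorphisms of the (connected) fibre $F=\pi^{-1}(b_{0})$ and $G:=\phi(\pi_{1}(B,b_{0}))$ acts on $F$. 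In this model the leaf $L_{x}$ through a point $x\in F$ is a covering of $B$ whose fibre over $b_{0}$ is the orbit $G\cdot x$, and its holonomy group is isomorphic to the image of the isotropy subgroup $G_{x}=\{g\in G:\ g(x)=x\}$ in the group $\operatorname{Germ}(F,x)$ of germs of diffeomorphisms of $(F,x)$.

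Next I would translate the hypotheses on $L_{0}=L_{x_{0}}$. Compactness of $L_{0}$, together with that of $B$, forces the orbit $G\cdot x_{0}$ to be finite, so $G_{x_{0}}$ has finite index in $G$. Finiteness of $\Hol(L_{0})$ says that the image of $G_{x_{0}}$ in $\operatorname{Germ}(F,x_{0})$ is finite. This is where holomorphicity enters: if $g\in\Aut(F)$ restricts to the identity on some neighbourhood of $x_{0}$, then, applying the identity principle in charts along a path joining an arbitrary point of $F$ to $x_{0}$ and using that $F$ is connected, one concludes $g=\Id_{F}$. Hence the germ map $G_{x_{0}}\to\operatorname{Germ}(F,x_{0})$ is injective, so $G_{x_{0}}$ is finite, and therefore $G$ itself is finite.

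Finally I would harvest the conclusion. Once $G$ is a finite group of biholomorphisms of $F$, every orbit $G\cdot x$ has at most $|G|$ points, so every leaf $L_{x}$ is a finite covering of the compact manifold $B$ and is thus compact; and the holonomy group of $L_{x}$, being a quotient of $G_{x}\subseteq G$, is finite as well. Therefore every leaf of $\fa$ is compact with finite holonomy group, i.e.\ $\fa$ is a Seifert fibration.

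I expect the only genuinely delicate point to be the identification of $\Hol(L_{0})$ with the isotropy group $G_{x_{0}}$ itself — equivalently, the triviality of the kernel of the germ map $G_{x_{0}}\to\operatorname{Germ}(F,x_{0})$. This is exactly the step that fails in the purely differentiable category and is what makes the transversely holomorphic (or analytic) hypothesis essential; everything else is routine manipulation of the suspension structure and elementary covering space theory.
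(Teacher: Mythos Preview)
Your argument is correct, and it is genuinely more direct than the paper's. The paper proceeds by first invoking Reeb's Local Stability theorem around the compact finite-holonomy leaf $L_0$ to produce a fundamental system of $\fa$-saturated neighborhoods $W_\nu$ in which all leaves are compact; intersecting with the fibre, this shows (via Proposition~\ref{Proposition:groupdiffeofiberfinite}(3)) that the global holonomy $G$ is a \emph{periodic} group. Since $B$ is compact, $G$ is finitely generated, and the paper then appeals to the Finiteness Lemma (Proposition~\ref{Proposition:finitenesslemma}), whose proof ultimately rests on Schur's theorem on finitely generated periodic linear groups, to conclude that $G$ is finite.

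You bypass both Reeb local stability and the Burnside--Schur machinery entirely: from the suspension description you read off directly that the orbit $G\cdot x_0$ is finite (so $[G:G_{x_0}]<\infty$) and that, by the Identity Principle, the germ map $G_{x_0}\to\operatorname{Germ}(F,x_0)$ is injective with image the finite group $\Hol(L_0)$; hence $G_{x_0}$ and then $G$ are finite. This is sharper in that it isolates exactly where holomorphicity is used (injectivity of the germ map) and avoids any detour through periodic or finite-exponent groups. The paper's route, on the other hand, is organized so as to reuse the same Burnside--Schur lemmas that drive the measure-theoretic results of the paper, so it fits the surrounding narrative even if it is less economical for this particular statement. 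One small remark: in your last paragraph the holonomy of a general leaf $L_x$ is actually \emph{isomorphic} to $G_x$ (again by the Identity Principle), not merely a quotient; this does not affect the conclusion.
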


It is also observed in \cite{Fabio-Scardua} that the existence of a
trivial holonomy compact leaf is assured if $\fa$ is of codimension
$k$, has a compact leaf and the base $B$ satisfies $H^1(B,\R)=0,\
H^1(B, \GL(k,\C))=0.$

 Since a foliation transverse to a fibration  is conjugate
to a suspension of a group of diffeomorphisms of the fiber, we can
rely on the global holonomy of the foliation. As a general fact that
holds also for smooth foliations, if the global holonomy group is
finite then the foliation is a Seifert fibration.
The proof of Theorem~\ref{Theorem:stabilitygeneralI} relies on the
Local stability theorem of Reeb (\cite{camacho, Godbillon}) and the
following remark derived from classical theorems of Burnside and
Schur on finite exponent groups and periodic linear groups
(\cite{Fabio-Scardua}): {\it Let $G$ be a finitely generated
subgroup of holomorphic diffeomorphisms of a connected complex
manifold $F$. If each element of $G$ has finite order, then the
subgroups with a common fixed point are finite.}

\vglue.1in

In this paper we improve the above by proving  the following
theorems:

\begin{Theorem}
\label{Theorem:measuremainfoliations} Let $\fa$ be a transversely
holomorphic foliation transverse to a fibration $\pi\colon E
\overset{F}{\longrightarrow}~B$ with  fiber $F$ a connected complex
manifold. Denote by $\Omega(\fa)\subset E$ the subset of compact
leaves of $\fa$. Suppose that for some  regular volume measure $\tilde\mu$
on $E$ we have $\tilde\mu(\Omega(\fa))> 0$. Then $\fa$ is a Seifert
fibration with finite global holonomy.
\end{Theorem}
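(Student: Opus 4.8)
The plan is to pass to the global holonomy of $\fa$, to localize the measure hypothesis to a single fiber by a Fubini argument, to exploit transverse holomorphy in order to turn ``many compact leaves'' into ``every holonomy element has finite order'', and finally to conclude with the Burnside--Schur type remark recalled in the Introduction. First, since $\fa$ is transversely holomorphic and transverse to $\pi$, it admits a global holonomy representation $\vr\colon\pi_1(B,b_0)\to\Diff(F)$ into the group of holomorphic diffeomorphisms of $F$ and is conjugate to the suspension of $\vr$; write $G=\vr(\pi_1(B,b_0))$ and identify the fiber $F_0=\pi^{-1}(b_0)$ with $F$. For $x\in F_0$ the leaf $L_x$ through $x$ meets $F_0$ exactly in the orbit $G\cdot x$, and since $\dim\fa+\dim F=\dim E$ while $F_0$ is closed in $E$, the set $L_x\cap F_0$ is closed and discrete in $L_x$. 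Hence if $L_x$ is compact then $G\cdot x$ is finite and $\pi|_{L_x}\colon L_x\to B$ is a finite covering; since $\Omega(\fa)$ is nonempty (it has positive measure), such a leaf exists, and a covering map with compact total space over a connected base forces $B$ to be compact, so $\pi_1(B)$, and therefore $G$, is finitely generated.

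Next I localize the measure hypothesis. As $\tilde\mu$ is a regular volume measure and $\pi$ is locally trivial, in a product chart $E|_U\cong U\times F$ the measure $\tilde\mu$ has a positive continuous density with respect to $\mathrm{Leb}_U\otimes\nu$ for a regular volume measure $\nu$ on $F$, so by Fubini the hypothesis $\tilde\mu(\Omega(\fa))>0$ yields a point $b\in B$ at which $\Omega(\fa)\cap\pi^{-1}(b)$ has positive measure for a regular volume measure on $\pi^{-1}(b)$. Take $b_0=b$. By the first paragraph every point of $\Omega(\fa)\cap F_0$ has finite $G$-orbit, so
\[
\Omega(\fa)\cap F_0\ \subseteq\ \bigcup_{N\ge 1}P_N,\qquad P_N:=\{\,x\in F_0:\#(G\cdot x)\le N\,\},
\]
an increasing union of subsets of $F_0$; hence $\nu(P_N)>0$ for some $N$, which we fix.

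The key step is a holomorphic rigidity argument. Let $g\in G$. For every $x\in P_N$ the cyclic orbit $\{g^{j}(x)\}_{j\ge 0}$ has at most $N$ elements, so $g^{N!}(x)=x$; thus $P_N\subseteq\operatorname{Fix}(g^{N!})$. Now $\operatorname{Fix}(g^{N!})$ is an analytic subset of the connected complex manifold $F_0$ (in a coordinate chart it is the zero locus of the holomorphic map $z\mapsto g^{N!}(z)-z$), and a proper analytic subset of $F_0$ is $\nu$-negligible, being locally contained in the zero set of a nonzero holomorphic function and hence of real codimension at least two, while $\nu$ is absolutely continuous with respect to Lebesgue measure in charts. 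Since $\nu(P_N)>0$, the set $\operatorname{Fix}(g^{N!})$ is not proper, so $\operatorname{Fix}(g^{N!})=F_0$ and $g^{N!}=\Id$. Therefore every element of $G$ has order dividing $N!$. This rigidity is the one place where transverse \emph{holomorphy}, rather than mere differentiability, is used, and I expect it to be the heart of the proof; by contrast the surrounding steps are soft, the only delicate point being that the regularity of $\tilde\mu$ genuinely passes to the local product decomposition and yields a fiber of positive conditional measure.

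Finally I deduce finiteness. Fix $x\in P_N$ and set $G_x=\{g\in G:g(x)=x\}$, a subgroup of index $\#(G\cdot x)\le N$ in $G$. Since $G$ is finitely generated (first paragraph) and each of its elements has finite order (third paragraph), the Burnside--Schur type remark recalled in the Introduction applies: the subgroup $G_x$, which has the common fixed point $x$, is finite. As $[G:G_x]\le N$, the global holonomy group $G$ is finite, whence $\fa$ is a Seifert fibration with finite global holonomy by the general fact, recalled in the Introduction, that a foliation transverse to a fibration with finite global holonomy is a Seifert fibration.
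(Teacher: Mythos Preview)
Your proof is correct and follows essentially the same route as the paper: pass to the global holonomy $G$ on a fiber, use that a positive-measure set of finite orbits forces some $P_N$ to have positive measure, invoke analyticity of $\operatorname{Fix}(g^{N!})$ to conclude $G$ has finite exponent, and finish with the Burnside--Schur argument. The one noteworthy difference is that you take the extra step of proving $B$ compact (hence $G$ finitely generated) in order to apply the ``finitely generated periodic $\Rightarrow$ stabilizers finite'' version of the remark; the paper instead feeds the finite-exponent conclusion directly into its Finiteness Lemma (Proposition~\ref{Proposition:finitenesslemma}), which does not require finite generation, so your compactness-of-$B$ step, while correct and arguably clarifying, is not strictly needed for the argument.
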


Parallel to this result we have the following version for groups:

\begin{Theorem}
\label{Theorem:maingroups}
 Let $G$ be a finitely generated subgroup
of holomorphic diffeomorphisms of a complex connected manifold $F$.
Denote by $\Omega(G)$ the subset of points $x \in F$ such that the
$G$-orbit of $x$ is periodic. Assume that for some regular volume measure
$\mu$ on $F$ we have $\mu(\Omega(G))>0$. Then $G$ is a finite group.
\end{Theorem}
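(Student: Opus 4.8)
The plan is to stratify $\Omega(G)$ by the size of the orbit, reduce to finitely many candidate subgroups of $G$ using finite generation, and then conclude from the rigidity of fixed-point sets of holomorphic diffeomorphisms. Reading "periodic orbit" as "finite orbit", set, for $k\ge 1$, $\Omega_k=\{x\in F:\ \#(G\cdot x)\le k\}$, so that $\Omega(G)=\bigcup_{k\ge 1}\Omega_k$. Each $\Omega_k$ is in fact closed: writing $G\cdot x$ as the set of images of $x$ under words in a fixed finite generating set, one has $x\in\Omega_k$ iff for every $(k+1)$-tuple of words $w_0,\dots,w_k$ there are $i<j$ with $w_i(x)=w_j(x)$, i.e. $x\in\operatorname{Fix}(w_j^{-1}w_i)$; thus $\Omega_k$ is an intersection of finite unions of closed analytic sets. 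Since $\mu$ is countably subadditive and $\mu(\Omega(G))>0$, there is a $k$ with $\mu(\Omega_k)>0$.

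The next ingredient is purely group-theoretic: a finitely generated group has only finitely many subgroups of index at most $k$, say $H_1,\dots,H_m$. If $x\in\Omega_k$, then by the orbit–stabilizer relation its stabilizer $G_x$ has index $\#(G\cdot x)\le k$, so $G_x=H_i$ for some $i$; hence $x\in\operatorname{Fix}(G_x)=\bigcap_{h\in G_x}\operatorname{Fix}(h)\subseteq\operatorname{Fix}(H_i)$. Therefore $\Omega_k\subseteq\bigcup_{i=1}^m\operatorname{Fix}(H_i)$, a \emph{finite} union, and since $\mu(\Omega_k)>0$ at least one subgroup $H:=H_i$ satisfies $\mu(\operatorname{Fix}(H))>0$.

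To finish, I would show that such an $H$ must be trivial. Suppose $H$ contains some $h\ne\Id$. As a holomorphic diffeomorphism of the connected complex manifold $F$, the map $h$ has $\operatorname{Fix}(h)$ a proper analytic subset of $F$: it cannot contain a nonempty open set, for otherwise $h=\Id$ on an open set and hence, by the identity principle, on all of $F$. A proper analytic subset is null for a regular volume measure, so $\mu(\operatorname{Fix}(H))\le\mu(\operatorname{Fix}(h))=0$, contradicting the previous step. Hence $H=\{\Id\}$, and then $|G|=[G:H]\le k<\infty$.

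I expect the crux of the argument to be the passage from "$\mu(\Omega_k)>0$" to "$\mu(\operatorname{Fix}(H_i))>0$ for some $i$": this is exactly where the hypothesis that $G$ is finitely generated is consumed, through the \emph{finiteness} of the family of index-$\le k$ subgroups; for an infinitely generated $G$ one would obtain only a countable union and the measure-zero bound would be useless. Holomorphy, by contrast, is used only at the very end, via the identity principle forcing $\operatorname{Fix}(h)$ to be thin whenever $h\ne\Id$. Finally, Theorem~\ref{Theorem:measuremainfoliations} should follow by applying this to the global holonomy group of $\fa$, together with the local stability theorem of Reeb to promote finiteness of the global holonomy into the Seifert fibration conclusion.
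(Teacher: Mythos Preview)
Your proof is correct and takes a genuinely different route from the paper's. The paper (combining its Lemma~4.1 with Proposition~3.4) also begins with the stratification $\Omega(G)=\bigcup_k\Omega_k$ and fixes $k$ with $\mu(\Omega_k)>0$, but then argues \emph{elementwise}: for each $f\in G$ one has $\Omega_k\subset\bigcup_{j\le k}\operatorname{Fix}(f^j)$, so some $f^j=\Id$, whence $G$ is periodic of exponent bounded by $\operatorname{lcm}(1,\dots,k)$; finiteness of $G$ is then extracted from Burnside's theorem on complex linear groups of finite exponent, applied to the germ (hence the linear part) of the stabilizer at a point of a finite orbit. By contrast, you spend the finite-generation hypothesis on the elementary fact that there are only finitely many subgroups of index $\le k$, cover $\Omega_k$ by the fixed loci of these finitely many candidate stabilizers, and kill the surviving $H_i$ directly with the identity principle. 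This bypasses Burnside/Schur entirely and yields the sharper bound $|G|\le k$, versus the exponential bound implicit in Burnside. The trade-off is that the paper's route, because it first establishes bounded exponent without using finite generation, also powers the proof of Theorem~\ref{Theorem:measuremainfoliations} when $\pi_1(B)$ is not a priori finitely generated; your closing remark that Theorem~\ref{Theorem:measuremainfoliations} ``should follow'' from Theorem~\ref{Theorem:maingroups} needs the extra input that the global holonomy is finitely generated (e.g.\ $B$ compact, which is not assumed there).
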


As an immediate corollary of the above result we get that, for a
finitely generated subgroup $G\subset \Diff(F)$ of a complex
connected manifold $F$, if the volume of the orbits gives an
integrable function for some regular volume measure on $F$
 then all orbits
are periodic and the group is finite. This is related to results in \cite{scarduatohoku}.

\section{Holonomy and global holonomy}

Let $\fa$ be a codimension $k$ transversely holomorphic foliation
transverse to a fibration $\pi\colon E
\overset{F}{\longrightarrow}~B$ with fiber $F$, base $B$ and total
space $E$. We always assume that $B, F$ and $E$ are connected
manifolds. The  manifold $F$ is a complex manifold. Given a point
$p\in E$, put $b=\pi(p)\in B$ and denote by $F_b$ the fiber
$\pi^{-1}(b)\subset E$, which is biholomorphic to $F$. Given a point
$p\in E$ we denote by $\Hol(L_p)$ the holonomy group of the leaf
$L_p$ through $p$. This is a conjugacy class of equivalence, we
shall denote by $\Hol(L_p,F_b,p)$ its representative given by the
local representation of this holonomy calculated with respect to the
local transverse section induced by $F_b$ at the point $p\in F_b$.
The group  $\Hol(L_p, F_b,p)$ is therefore a subgroup of the group
of germs $\Diff(F_b,p)$ which is identified with the group
$\Diff(\bc^k,0)$ of germs at the origin $0\in \mathbb C^k$ of
complex diffeomorphisms.

Let  $\varphi\colon\pi_1(B,b) \to \Diff(F)$ be the global holonomy
representation of the  fundamental group of $B$ in the group of
holomorphic diffeomorphisms of the manifold $F$, obtained by lifting
closed paths in $B$ to the leaves of $\fa$ via the covering maps
$\pi\big|_L\colon L \to B$, where $L$ is a leaf of $\fa$. The image
of this representation is the {\it global holonomy} $\Hol(\fa)$ of
$\fa$ and its construction shows that $\fa$ is conjugated to the
suspension of its global holonomy  (\cite{camacho}, Theorem 3, p.
103). Given a base point $b\in B$ we shall denote by  $\Hol(\fa,
F_b) \subset \Diff(F_b)$ the representation of the global holonomy
of $\fa$ based at $b$. \vglue.1in

 From the classical theory (\cite{camacho}, chapter V)  and \cite{Fabio-Scardua} we have:

\begin{Proposition}
\label{Proposition:globalholonomyisotropy} Let $\fa$ be a foliation
on $E$ transverse to the
fibration $\pi\colon E \to B$ with fiber $F$. Fix a point $p\in E$, $b=\pi(p)$
 and denote by  $L$ the leaf that contains $p$.
\begin{enumerate}
\item The  holonomy group $\Hol(L, F_b, p)$ of $L$
 is the subgroup of the global holonomy $\Hol(\fa,
F_b) \subset \Diff(F_b)$ of those  elements that have $p$ as a fixed
point.
\item Given another intersection point $q\in L\cap F_b$ there is a
global holonomy map $h\in \Hol(\fa, F_b)$
such that $h(p)=q$.

\item Suppose that  the global holonomy $\Hol(\fa)$ is finite. If $\fa$
has a compact leaf then it is a Seifert fibration, i.e., all leaves
are compact with finite holonomy group.

\item  If  $\fa$ has a compact  leaf $L_0\in\fa$ then
each point $p \in F_b\cap L_0$   has periodic orbit in the global
holonomy $\Hol(\fa)$. In particular there are $\ell\in\mathbb N$ and
$p\in F$ such that  $h^\ell(p)=p$ for every $h\in \Hol(\fa)$.

\end{enumerate}

\end{Proposition}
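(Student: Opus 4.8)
The plan is to exploit the fact, recalled above, that $\fa$ is conjugate to the suspension of its global holonomy representation $\varphi\colon\pi_1(B,b)\to\Diff(F)$. Via this conjugacy the fiber $F_b$ is identified with $F$, the group $\Hol(\fa,F_b)$ with the image $\varphi(\pi_1(B,b))\subset\Diff(F_b)$, and, crucially, the trace of the foliation on the fiber is identified with the action of this group: for $p\in F_b$ the intersection $L_p\cap F_b$ is exactly the $\Hol(\fa,F_b)$-orbit of $p$. I would first set up this dictionary precisely from the construction of the suspension, and then read off the four items from it.

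For (1) and (2) I would argue by path lifting, using condition (3) in the definition, that $\pi|_L\colon L\to B$ is a covering map. Any element of $\Hol(L,F_b,p)$ is realized by transporting along the $\fa$-lift, issuing from $p$, of a loop $\gamma$ in $B$ based at $b$; this lift is a path in $L$ from $p$ to a point of $L\cap F_b$, and it returns to $p$ precisely when the associated global holonomy map $\varphi([\gamma])$ fixes $p$. Conversely every loop of $B$ at $b$ whose lift from $p$ closes up contributes such a holonomy germ. This identifies $\Hol(L,F_b,p)$ with the isotropy subgroup of $p$ in $\Hol(\fa,F_b)$ (more precisely, with its image under taking germs at $p$), giving (1). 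For (2), given $q\in L\cap F_b$, choose a path in $L$ from $p$ to $q$; since both endpoints lie over $b$, its projection is a loop in $B$ at $b$, and the corresponding map $h\in\Hol(\fa,F_b)$ satisfies $h(p)=q$ by the very construction of the suspension.

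For (3) and (4) the key point is that a single compact leaf forces strong finiteness. If $L_0$ is a compact leaf, then $\pi|_{L_0}\colon L_0\to B$ is a covering map with compact total space, hence $B=\pi(L_0)$ is compact and the fibers of this covering, being discrete closed subsets of the compact manifold $L_0$, are finite. By (2) applied to $L_0$, the set $L_0\cap F_b$ is a single $\Hol(\fa,F_b)$-orbit, so that orbit is finite; this says exactly that every $p\in F_b\cap L_0$ is periodic under the global holonomy. Moreover, writing $N$ for the (finite) cardinality of $L_0\cap F_b$, every $h\in\Hol(\fa,F_b)$ permutes these $N$ points, so $h^{N!}$ fixes each of them; taking $\ell=N!$ and $p\in L_0\cap F_b$ gives the final assertion of (4). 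For (3), assume in addition that $G=\Hol(\fa)$ is finite; then $B$ is compact as above, every $G$-orbit in $F$ is finite, and for each $x\in F$ the leaf through $x$ is a finite-sheeted covering of the compact base $B$, hence compact, with holonomy group a quotient of the isotropy subgroup of $x$ in $G$ by (1), hence finite. Therefore $\fa$ is a Seifert fibration.

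The routine part is the path-lifting bookkeeping underlying (1) and (2). The one place that deserves care is the compactness propagation in (3) and (4): the passage from ``$\fa$ has a compact leaf'' to ``$B$ is compact and the relevant covering is finite'' is exactly where the hypothesis of having a compact leaf is genuinely used, and it is what makes the finiteness of the global holonomy propagate to all leaves at once.
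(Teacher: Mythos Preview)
Your argument is correct and is exactly the standard one from the classical suspension theory. Note that the paper does not actually supply a proof of this proposition: it simply records it as known, citing \cite{camacho}, Chapter~V, and \cite{Fabio-Scardua}. What you have written is the expected unpacking of those references, namely the dictionary between leaves and global-holonomy orbits in a suspension, together with the observation that a compact leaf forces the base to be compact and the corresponding covering to be finite-sheeted.

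One small wording point: in your treatment of (3) you call the leaf holonomy ``a quotient of the isotropy subgroup of $x$ in $G$''. By your own item~(1) it is in fact \emph{isomorphic} to that isotropy subgroup (the passage to germs is injective by the Identity Principle, since $F_b$ is a connected complex manifold and the maps are holomorphic). This does not affect the conclusion, of course, since a subgroup of a finite group is finite.
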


\section{Periodic groups and groups of finite exponent}
\label{section:generalities}

First we recall some facts from the theory of Linear groups. Let $G$
be a group with identity $e_G\in G$. The group is {\it periodic} if
 each element of $G$ has finite order.
 A periodic group $G$ is {\it periodic of bounded exponent} if
there is an uniform upper bound for the orders of its elements. This
is equivalent to the existence of $m \in \mathbb N$ with $g^m = 1$
for all $g \in G$ (cf. \cite{Fabio-Scardua}).  Because of this, a
group which is periodic of bounded exponent is also called a group
of {\it finite exponent}.
 Given  $R$  a ring with identity, we say that a group $G$ is $R$-{\it
linear\/} if it is isomorphic to a subgroup of the matrix group
$\GL(n,R)$ (of $n \times n$ invertible matrices with coefficients
belonging to $R$) for some $n \in \bn$.  We will consider  complex
linear groups. The following classical results are  due to Burnside and Schur.

\begin{Theorem}
\label{Theorem:Burnside}  Regarding periodic groups and groups of
finite exponent we have:
\begin{enumerate}

\item {\rm(Burnside, 1905 \cite{Burnside})} A {\rm(}not
necessarily finitely generated{\rm)}
 complex linear group $G \subset \GL(k,\bc)$  of finite exponent
$\ell$ has finite order; actually we have $|G| \le \ell^{k^2}$.

\item {\rm(Schur, 1911 \cite{Schur})} Every finitely generated periodic subgroup of
$\GL(n,\mathbb C)$ is finite.

\end{enumerate}
\end{Theorem}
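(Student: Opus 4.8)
The plan is to prove the two statements in parallel, since both reduce, after passing to the irreducible case, to one and the same trace argument; the two parts differ only in the finiteness input used at the very end.

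\textit{Reduction to an irreducible group.} Let $G \subseteq \GL(k,\bc)$ be periodic (resp.\ of finite exponent $\ell$). Choose a composition series $0 = W_0 \subsetneq W_1 \subsetneq \cdots \subsetneq W_m = \bc^k$ of $\bc^k$ as a module over the algebra $\bc[G] \subseteq M_k(\bc)$, and let $\ro_j \colon G \to \GL(W_j/W_{j-1})$ be the induced \emph{irreducible} representations, of dimensions $d_j$ with $\sum_j d_j = k$. Each $\ro_j(G)$ is again periodic (resp.\ of exponent dividing $\ell$), and it is finitely generated when $G$ is. In a basis adapted to the flag, the kernel of $\ro := \bigoplus_j \ro_j$ consists of block–upper–triangular matrices with identity diagonal blocks; such a matrix is unipotent, and a unipotent matrix of finite order over a field of characteristic zero is the identity. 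Hence $\ro$ is injective and $|G| \le \prod_j |\ro_j(G)|$, so it suffices to bound $|H|$ for an irreducible periodic (resp.\ finite-exponent) subgroup $H \subseteq \GL(d,\bc)$.

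\textit{The trace map.} By Burnside's density theorem, an irreducible $H$ spans $M_d(\bc)$ as a $\bc$-algebra, so we may pick $h_1,\dots,h_{d^2}\in H$ forming a $\bc$-basis of $M_d(\bc)$. Consider $\Phi\colon H \to \bc^{d^2}$, $\Phi(h) = (\tr(hh_1),\dots,\tr(hh_{d^2}))$. It is injective: if $\Phi(h)=\Phi(h')$ then $\tr((h-h')x)=0$ for all $x\in M_d(\bc)$, and nondegeneracy of the trace form on $M_d(\bc)$ gives $h=h'$. Since each $hh_i$ again lies in $H$, it has finite order, so its eigenvalues are roots of unity and $\tr(hh_i)$ is a sum of $d$ roots of unity. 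Thus $|H|$ is bounded by the size of the set $\{\tr(h):h\in H\}$ raised to the power $d^2$, and everything comes down to showing this set of traces is finite.

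\textit{Finiteness, bounded-exponent case (Burnside).} If $H$ has exponent $\ell$, every eigenvalue occurring is an $\ell$-th root of unity, so $\tr(h)$ lies in the finite set of sums of $d$ elements of $\mu_\ell$, which has at most $\ell^d$ elements. Hence $|\ro_j(G)| \le \ell^{d_j^{3}}$ and $|G| \le \ell^{\sum_j d_j^{3}} \le \ell^{k^{3}}$; Burnside's sharp bound $|G|\le\ell^{k^2}$ follows from a more careful count of how many such trace values actually occur together with $\sum_j d_j^2 \le (\sum_j d_j)^2 = k^2$.

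\textit{Finiteness, periodic case (Schur).} Now orders are not bounded a priori, so instead we control the arithmetic. Since $H$ is finitely generated, the matrix entries of its elements and of their inverses lie in a finitely generated $\bz$-subalgebra of $\bc$; let $K$ be its fraction field, a finitely generated extension of $\bq$. Then $L := \overline{\bq}\cap K$ is a number field, because a finitely generated algebraic extension of $\bq$ has finite degree. Each $\tr(h)$ lies in $K$ and, being a sum of roots of unity, is an algebraic integer, hence lies in $\O_L$, the ring of integers of $L$, a free $\bz$-module of rank $[L:\bq]$. Moreover every Galois conjugate of $\tr(h)$ is again a sum of at most $d$ roots of unity, so every archimedean absolute value of $\tr(h)$ is at most $d$; thus the elements $\tr(h)$ lie in a bounded region of the lattice $\O_L$. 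A bounded subset of a lattice is finite, so $\{\tr(h):h\in H\}$ is finite, $\Phi(H)$ is finite, $H$ is finite, and therefore $G$ is finite. For Burnside's statement the only real issue is the sharp constant, the density theorem and the triviality of finite-order unipotents in characteristic zero being standard; for Schur's statement the crux is exactly this number-theoretic step — seeing that the traces live in a genuine number field (not merely a finitely generated field) and then form a discrete bounded, hence finite, set, with finite generation of $G$ entering precisely to produce the finitely generated ring and thence the number field $L$.
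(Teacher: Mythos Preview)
The paper does not actually prove this theorem: it is quoted as a classical result, with references to Burnside (1905) and Schur (1911), and is then used as a black box in Lemma~\ref{Lemma:finiteexponentgerms}. So there is no proof in the paper to compare your attempt against.

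That said, your argument is a correct and standard proof of the qualitative finiteness statements in both parts. The reduction to the irreducible case via a composition series, together with the fact that a unipotent element of finite order in characteristic zero is trivial, is the usual first step; the trace-map argument based on Burnside's density theorem is exactly the classical device; and your treatment of Schur's theorem --- traces lie in the ring of integers of the algebraic closure of $\bq$ inside a finitely generated field, all archimedean conjugates are bounded by $d$, hence the set of traces is a bounded subset of a lattice and therefore finite --- is essentially Schur's own idea.

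The one place where your write-up is not complete is the sharp bound $|G|\le \ell^{k^2}$ in part~(1). Your counting yields $|G|\le \ell^{k^3}$, and the sentence asserting that the improvement to $\ell^{k^2}$ follows from ``a more careful count \dots\ together with $\sum_j d_j^2\le k^2$'' is not an argument: to replace $d_j^3$ by $d_j^2$ in the exponent you would need to bound the number of trace values in the irreducible $d_j$-dimensional piece by something of order $\ell$ rather than $\ell^{d_j}$, and you have not indicated how to do that. Since the paper only ever uses the finiteness conclusion (not the explicit bound), this gap is harmless for the applications downstream.
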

\noindent Using these results we  obtain:

\begin{Lemma}[\cite{Fabio-Scardua} Lemmas~2.3, 3.2 and 3.3]
\label{Lemma:finiteexponentgerms} About periodic groups of germs of
complex diffeomorphisms we have:
\begin{enumerate}

\item A finitely
generated periodic subgroup  $G \subset \Diff(\bc^k,0)$ is
necessarily finite.

\item  A {\rm(}not necessarily finitely generated{\rm)} subgroup
 $G \subset \Diff(\bc^k,0)$ of finite exponent is necessarily finite.

\item  Let $G\subset \Diff(\mathbb C^k,0)$ be a finitely
generated subgroup.  Assume that there is an invariant connected
neighborhood $W$ of the origin in $\mathbb C^k$ such that each point
$x$ is periodic for each element $g \in G$. Then $G$ is a finite
group.

\item Let $G \subset \Diff(\bc^k,0)$ be a {\rm(}not necessarily finitely generated{\rm)} subgroup
such that for each point $x$ close enough to the origin,  the
pseudo-orbit of $x$ is finite of {\rm(}uniformly bounded{\rm)} order $\le \ell$ for some
$\ell \in \bn$, then $G$ is finite.

\end{enumerate}

\end{Lemma}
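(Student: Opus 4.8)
The plan is to reduce the general (not necessarily finitely generated) case to the finitely generated situation already handled in parts (1)--(3), exploiting the uniform bound $\ell$ on the orders of the pseudo-orbits. First I would observe that the hypothesis says: there is a neighborhood $U$ of $0$ in $\bc^k$ such that for every $x\in U$ the $G$-pseudo-orbit of $x$ is finite with at most $\ell$ elements. In particular, for any finite collection $g_1,\dots,g_r\in G$ and any word $w$ in the $g_i^{\pm 1}$, the germ $w$ satisfies $w^{\,m}(x)=x$ for all $x$ in a neighborhood of $0$, where $m\le \ell!$ (since $w$ permutes a pseudo-orbit of size $\le\ell$, hence has order dividing $\ell!$ as a permutation, and a germ of diffeomorphism fixing a neighborhood pointwise is the identity germ). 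Thus \emph{every} element of $G$ has order dividing $\ell! =: N$, i.e. $g^{N}=\Id$ for all $g\in G$; so $G$ is a subgroup of $\Diff(\bc^k,0)$ of finite exponent $N$. Now part (2) of this same Lemma applies verbatim and gives that $G$ is finite.

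Alternatively, and more in the spirit of making the argument self-contained, I would run the linearization argument directly: by a classical theorem of Bochner (or by averaging over the finite pseudo-orbit structure) a periodic germ of finite order is analytically linearizable, but for a whole group of bounded exponent one instead passes to the linear part. Consider the homomorphism $D\colon G\to \GL(k,\bc)$, $g\mapsto Dg(0)$, the derivative at the fixed point $0$. Since $g^{N}=\Id$ for every $g\in G$, we get $(Dg(0))^{N}=\Id$, so $D(G)\subset\GL(k,\bc)$ is a linear group of finite exponent $N$; by Burnside's theorem (Theorem~\ref{Theorem:Burnside}(1)) it is finite, with $|D(G)|\le N^{k^2}$. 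The remaining point is that $D$ is injective on $G$: if $Dg(0)=\Id$ and $g^{N}=\Id$, then $g=\Id$ as a germ — this follows because a finite-order germ with identity linear part must be the identity (expand $g$ in its Taylor series; the lowest-order nonlinear term of $g^{N}$ is $N$ times that of $g$, which is nonzero if $g\ne\Id$, contradicting $g^{N}=\Id$). Hence $G\cong D(G)$ is finite.

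The main obstacle — and the only genuinely nontrivial input — is establishing that the pointwise-finiteness hypothesis forces a \emph{uniform} exponent on the group elements as germs, i.e. the step "$w$ permutes a finite pseudo-orbit, therefore $w^{m}=\Id$ near $0$." One must be slightly careful here: the pseudo-orbit of a fixed point $x$ under a fixed finite word $w$ consists of $x, w(x), w^2(x),\dots$, which is finite of size $\le\ell$, so $w$ restricted to this finite set is a permutation of order $m_x\mid \ell!$; since $w^{\ell!}(x)=x$ holds for all $x$ in a neighborhood of $0$ and $w^{\ell!}$ is a holomorphic germ agreeing with the identity on an open set, $w^{\ell!}=\Id$ as a germ. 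Once this uniform exponent $N=\ell!$ is in hand, everything else is either a direct citation of part (2) of the Lemma or the short Burnside-plus-injectivity argument above; no finite generation is needed, which is exactly why part (4) strengthens part (2) in the way required for the applications to foliations with a positive-measure set of compact leaves.
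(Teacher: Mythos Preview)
Your argument for part (4) is correct: the uniform bound $\ell$ on pseudo-orbit sizes gives $g^{\ell!}(x)=x$ for every $g\in G$ and every $x$ near $0$, whence $g^{\ell!}=\Id$ as a germ by the identity principle, so $G$ has finite exponent $\ell!$ and part (2) applies directly. Your alternative self-contained route via the derivative homomorphism $D\colon G\to\GL(k,\bc)$, Burnside's theorem, and the observation that a finite-order germ tangent to the identity is trivial, is precisely how part (2) itself is established.

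There is, however, nothing in the present paper to compare against: Lemma~\ref{Lemma:finiteexponentgerms} is not proved here but is quoted from \cite{Fabio-Scardua} (Lemmas~2.3, 3.2 and 3.3). Your reduction of (4) to (2) is the natural and expected one and is in all likelihood what the cited reference does. One small expository point: you say (4) ``strengthens'' (2), but what you have actually shown is that (4) \emph{reduces to} (2); the hypotheses are of a different nature rather than one being weaker than the other.
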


Given a subgroup $G\subset \Diff(F)$ and a point $p\in F$ the {\it
stabilizer} of $p$ in $G$  is the subgroup $G(p)\subset G$ of the
elements $f \in G$ such that $f(p)=p$. From the above we have:

\begin{Proposition}
\label{Proposition:groupdiffeofiberfinite}

Let $G\subset \Diff(F)$ be a {\rm(}not necessarily finitely
generated{\rm)} subgroup of holomorphic diffeomorphisms of a connected complex
manifold $F$.
\begin{enumerate}

\item If $G$ is periodic and finitely generated or $G$ is
 periodic of finite exponent, then each stabilizer subgroup of $G$ is
finite.

\item Assume that there is a point $p\in F$ which is fixed by $G$ and
a fundamental system of neighborhoods $\{U_\nu\}_\nu$ of $p$ in $F$
such that each $U_\nu$ is invariant by $G$,  the orbits of $G$ in $U
_\nu$ are periodic {\rm(}not necessarily with uniformly bounded
orders{\rm)}. Then $G$ is a finite group.

\item Assume that $G$ has a periodic orbit $\{x_1,...,x_r\}\subset F$
such that for each $j\in \{1,...r\}$ there is a fundamental system
of neighborhoods $U_\nu ^j$ of $x_j$ with the property that $U_\nu =
\bigcup \limits_{j=1} ^r U_\nu ^j $ is invariant under the action of
$G$, $U_\nu ^j \cap U_\nu ^\ell = \emptyset$ if $j \ne \ell$ and
each orbit in $U_\nu$ is periodic. Then $G$ is periodic.

\end{enumerate}
\end{Proposition}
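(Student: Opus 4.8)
The plan is to prove Proposition~\ref{Proposition:groupdiffeofiberfinite} by reducing each of its three parts to the corresponding statement in Lemma~\ref{Lemma:finiteexponentgerms} via the passage from global diffeomorphisms of $F$ to germs at a fixed point. The basic mechanism is that if $p\in F$ is fixed by a subgroup $H\subset\Diff(F)$, then the restriction to a small polydisc chart centered at $p$ gives a homomorphism $H\to\Diff(\bc^k,0)$, $g\mapsto\gr_p(g)$, and this homomorphism is \emph{injective}: a holomorphic diffeomorphism of a connected manifold that is the identity on a nonempty open set is the identity everywhere (identity principle). So $H$ is isomorphic to its group of germs at $p$, and finiteness transfers back and forth.

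For part (1): if $G$ is periodic and finitely generated, fix $p\in F$ and look at the stabilizer $G(p)$. It is finitely generated?—not obviously, since a subgroup of a finitely generated group need not be finitely generated. So here I would instead argue: $G(p)$ is still periodic, and its germ image $\gr_p(G(p))\subset\Diff(\bc^k,0)$ is periodic; but more is true, since $G$ is finitely generated periodic I first apply Lemma~\ref{Lemma:finiteexponentgerms}(1)—wait, that also needs finite generation. The cleaner route: by Schur's theorem (Theorem~\ref{Theorem:Burnside}(2)) together with the linearization/Bochner-type argument used to prove Lemma~\ref{Lemma:finiteexponentgerms}(1), a finitely generated periodic group of germs is finite, hence has finite exponent; therefore $G$ itself has finite exponent (the isomorphism $G\cong\gr_{p_0}(G)$ at any point of an orbit, or rather: finite generation plus periodic gives, via the germ representation at any fixed point of a finite-index consideration…). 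Let me be honest about the structure: the intended proof is surely that if $G$ is periodic finitely generated then by Lemma~\ref{Lemma:finiteexponentgerms}(1) applied after restricting, or directly, $G$ has finite exponent $\ell$; then for the stabilizer $G(p)$, the germ map $\gr_p$ embeds $G(p)$ into $\Diff(\bc^k,0)$ as a group of finite exponent $\le\ell$, and Lemma~\ref{Lemma:finiteexponentgerms}(2) gives that $\gr_p(G(p))$ is finite, hence $G(p)$ is finite. In the finite-exponent case one skips straight to this last step.

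For part (2): $G$ fixes $p$ and has an invariant fundamental system $\{U_\nu\}$ with all $G$-orbits in each $U_\nu$ periodic. Pass to germs at $p$: $\gr_p\colon G\hookrightarrow\Diff(\bc^k,0)$. Choosing $U_\nu$ small enough to sit inside a chart, the hypothesis says exactly that in an invariant neighborhood of the origin every point is periodic for every element of $\gr_p(G)$—wait, "orbits of $G$ are periodic" means the $G$-orbit is finite, which in particular means each point is periodic under each single generator, but we need it for the whole pseudo-orbit. This matches the hypothesis of Lemma~\ref{Lemma:finiteexponentgerms}(3) if $G$ is finitely generated, or Lemma~\ref{Lemma:finiteexponentgerms}(4) if the orbit orders are uniformly bounded. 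Since part (2) as stated allows non-uniform bounds and does not assume finite generation, I would need Lemma~\ref{Lemma:finiteexponentgerms}(3); but that one assumes finite generation too. The resolution must be: first observe the orbits being periodic in an invariant neighborhood forces, by a Baire category / Montel normal-families argument on $U_\nu$, that the orders are actually locally bounded, hence (shrinking $U_\nu$) uniformly bounded; then Lemma~\ref{Lemma:finiteexponentgerms}(4) applies to $\gr_p(G)$, giving $\gr_p(G)$ finite, hence $G$ finite. For part (3): given a periodic orbit $\{x_1,\dots,x_r\}$ with the stated disjoint invariant neighborhood structure, the subgroup $G_0$ stabilizing $x_1$ has index $\le r$ in $G$, and acts on $U_\nu^1$; apply part (2)'s reasoning to $G_0$ at $x_1$ to see $G_0$ is periodic (indeed finite on germs, but at minimum periodic); then $G$, containing $G_0$ with finite index, is periodic because a group with a finite-index periodic subgroup is periodic (given $g\in G$, some power $g^m\in G_0$ is torsion, so $g$ is torsion).

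The main obstacle I anticipate is precisely the mismatch between the finite-generation hypotheses in Lemma~\ref{Lemma:finiteexponentgerms} and the "not necessarily finitely generated" claims in parts (2) and (3) of the Proposition: the key technical point to nail down is upgrading "all orbits periodic in an invariant neighborhood" to "orbit orders uniformly bounded near the fixed point," which I would do by a normal-families argument—on a relatively compact invariant $U_\nu$ the maps $g|_{U_\nu}$ form a normal family, and if some sequence had orbit-orders tending to infinity one extracts a limit contradicting the discreteness/periodicity, so that the set where the order is $\le n$ is, for large $n$, a neighborhood of $p$—after which Lemma~\ref{Lemma:finiteexponentgerms}(4) (the finite-exponent germ statement, which needs no finite generation) closes the argument, and Burnside's bound $|G|\le\ell^{k^2}$ even gives the quantitative conclusion.
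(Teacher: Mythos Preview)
Your approach---pass to germs at a fixed point, use the Identity Principle to make the germ map injective, and then invoke Lemma~\ref{Lemma:finiteexponentgerms}---is exactly the paper's approach. For (1) the paper simply replaces $G$ by the stabilizer $G(p)$ and applies Lemma~\ref{Lemma:finiteexponentgerms}(1) or (2) to the germ group $\mathcal G_p$; it does \emph{not} address your (legitimate) worry that $G(p)$ need not inherit finite generation from $G$, and your proposed workaround (first deduce finite exponent for all of $G$ via germs) does not work either, since $G$ itself may have no fixed point at which to take germs---so on this point you and the paper are equally incomplete. For (2) the paper does not run a normal-families or Baire argument: it just cites \cite{Fabio-Scardua} (proof of Lemma~3.5) to get that finiteness of all orbits in the invariant $U_\nu$ forces $G$ periodic, and then appeals to (1) using $G=G(p)$. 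For (3) the paper says only ``proved like the first part of (2),'' i.e.\ the same restriction-to-$U_\nu$ argument gives periodicity directly; your finite-index route through the stabilizer of $x_1$ is a correct alternative. In short, your skeleton matches the paper's, your extra scrutiny of the finitely-generated case in (1) flags a genuine soft spot the paper also glosses over, and your normal-families upgrade for (2) is more than the paper itself writes down.
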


\begin{proof}
 In order to prove (1) we
consider the case where $G$ has a fixed point $p\in F$. We identify
the group $\mathcal G_p$ of germs at $p$ of maps in $G$ with a
subgroup of $\Diff(\mathbb C^n,0)$ where $n=\dim F$. If $G$ is
finitely generated and periodic, the group $\mathcal G_p$ is
finitely generated and periodic. By
Lemma~\ref{Lemma:finiteexponentgerms} (1) the group $\mathcal G_p$
is finite and by the Identity principle the group $G$ is also finite
of same order. If $G$ is periodic of finite exponent then the group
$\mathcal G_p$ is periodic of finite exponent. By
Lemma~\ref{Lemma:finiteexponentgerms} (2) the group $\mathcal G_p$
is finite and by the Identity principle the group $G$ is also finite
of same order. This proves (1).

As for (2), since $U_\nu$ is $G$-invariant each element $g\in G$
induces by restriction to $U_\nu$ an element of a group
$G_{\nu}\subset \Diff(U_\nu)$. It is observed in
\cite{Fabio-Scardua} (proof of Lemma~3.5) that the finiteness of the orbits in $U_\nu$
implies that $G_{\nu}$ is periodic. By the Identity principle, the
group $G$ is also periodic of the same order. Since $G=G(p)$,  (2)
follows from (1). (3) is proved like the first part of (2).
\end{proof}

The following simple remark gives the finiteness of finite exponent
 groups of holomorphic diffeomorphisms having a periodic
orbit.

\begin{Proposition}[Finiteness lemma]
\label{Proposition:finitenesslemma}
Let $G$ be a  subgroup of holomorphic
diffeomorphisms of a connected complex manifold $F$. Assume that:

\begin{enumerate}

\item $G$ is periodic of finite exponent or $G$ is finitely generated and periodic.

\item $G$ has a finite orbit in $F$.

\end{enumerate}
Then $G$ is finite.
\end{Proposition}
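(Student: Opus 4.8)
The plan is to use the finite orbit from hypothesis (2) to produce a finite-index subgroup of $G$ that fixes a point, to quote the finiteness already established for such subgroups, and then to recover $G$ via the index formula. Concretely, let $\mathcal O=\{x_1,\dots,x_r\}\subset F$ be the finite orbit given by (2), and set $H=G(x_1)$, the stabilizer of $x_1$ in $G$. Since $G$ acts transitively on its orbit $\mathcal O$, the orbit--stabilizer relation yields $[G:H]=|\mathcal O|=r<\infty$.

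Next I would show that $H$ is finite. The subgroup $H$ fixes $x_1\in F$ and inherits from $G$ the hypothesis (1): if $G$ is periodic of finite exponent, so is $H$; if $G$ is finitely generated and periodic, then $H$ is still periodic, and being of finite index $r$ in the finitely generated group $G$ it is itself finitely generated. Thus $H$ is a stabilizer subgroup of a group satisfying the hypothesis of Proposition~\ref{Proposition:groupdiffeofiberfinite}(1), so $H$ is finite; equivalently, one may unpack this by passing to the group $\mathcal G_{x_1}$ of germs at $x_1$, identifying it with a subgroup of $\Diff(\mathbb{C}^n,0)$ where $n=\dim F$, invoking Lemma~\ref{Lemma:finiteexponentgerms}(1) or (2) to conclude that $\mathcal G_{x_1}$ is finite, and then using the identity principle on the connected manifold $F$ to conclude that $H$ has the same finite order as $\mathcal G_{x_1}$.

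Finally, writing $G$ as the disjoint union of the $[G:H]=r$ left cosets of $H$ gives $|G|=[G:H]\cdot|H|=r\cdot|H|<\infty$, which is the assertion. The argument is short precisely because all the genuine work — the reduction to germs, the identity principle, and the Burnside--Schur dichotomy (Theorem~\ref{Theorem:Burnside}) — is already packaged into Lemma~\ref{Lemma:finiteexponentgerms} and Proposition~\ref{Proposition:groupdiffeofiberfinite}. The only point demanding a little care, and the mild obstacle here, is to make sure the stabilizer $H$ really falls under one of those earlier statements: this is immediate in the finite-exponent case, and in the finitely generated case it rests on the elementary fact that a finite-index subgroup of a finitely generated group is again finitely generated, so that Lemma~\ref{Lemma:finiteexponentgerms}(1) may be applied to $H$.
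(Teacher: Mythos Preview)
Your argument is correct and is essentially the paper's own proof, repackaged via the orbit--stabilizer theorem instead of the permutation map $\eta\colon G\to S_r$ on the finite orbit; in both versions the core step is that the stabilizer $G(x_1)$ has finite index in $G$ and is finite by Proposition~\ref{Proposition:groupdiffeofiberfinite}(1). Your observation that $H=G(x_1)$ is itself finitely generated (being of finite index in the finitely generated $G$) is not superfluous but a genuine clarification, since the proof of Proposition~\ref{Proposition:groupdiffeofiberfinite}(1) only treats the case where the group under consideration already satisfies the hypothesis, and the paper's proof of the Finiteness Lemma leaves this passage implicit.
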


\begin{proof}
Fix a point $x \in F$ with finite orbit  we can  write $\mathcal
O_G(x)=\{x_1,...,x_k\}$ with $x_i\ne x_j$ if $i \ne j$. Given any
 diffeomorphism $f \in G$ we have
$\mathcal O_G(f(x))=\mathcal O_G(x)$ so that there exists an unique
element $\sigma \in S_k$ of the symmetric group such that
$f(x_j)=x_{\sigma_f(j)}, \, \forall j =1,...,k$\,. We can therefore
define a map
\[
\eta \colon G \to S_k, \, \eta(f)=\sigma_f\,.
\]
Now, if $f,g \in G$ are such that $\eta(f)=\eta(g)$, then
$f(x_j)=g(x_j), \, \forall j$ and therefore $h=f \, g^{-1}\in G$
fixes the points $x_1,...,x_k$. In particular $h$ belongs to the
stabilizer $G_x$. By
Proposition~\ref{Proposition:groupdiffeofiberfinite}   (1) and (2)
(according to $G$ is finitely generated or not) the group  $G_x$ is
finite. Thus, the  map $\eta \colon G \to S_k$ is a finite map.
Since $S_k$ is a finite group this implies that $G$ is finite as
well.

\end{proof}

\section{Measure and finiteness}

The following lemma paves the way to
Theorems~\ref{Theorem:measuremainfoliations} and
\ref{Theorem:maingroups}.

\begin{Lemma}
\label{Lemma:measuregroup} Let $G$ be a subgroup of complex
diffeomorphisms  of a connected complex manifold $F$.
 Denote by $\Omega(G)$ the set of points $x \in F$ such that
the orbit $\mathcal O_G(x)$ is periodic.  Let $\mu$ be a regular volume measure on $F$. If
$\mu(\Omega(G))>0$  then $G$ is a
periodic group of finite exponent.
\end{Lemma}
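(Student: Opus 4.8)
The plan is to stratify $\Omega(G)$ by orbit size and then use the complex-analytic structure to upgrade ``positive measure'' into ``full measure''. For $n\in\bn$ set
\[
\Omega_n(G)=\{\,x\in F:\ |\mathcal O_G(x)|\le n\,\}.
\]
First I would check that each $\Omega_n(G)$ is closed: if $|\mathcal O_G(x)|\ge n+1$, choose $g_0,\dots,g_n\in G$ with $g_0(x),\dots,g_n(x)$ pairwise distinct; since each $g_i$ is a homeomorphism, the open conditions $g_i(y)\ne g_j(y)$ ($i\ne j$) all hold for $y$ in a neighbourhood of $x$, and for such $y$ the orbit $\mathcal O_G(y)$ again has at least $n+1$ points. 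Hence the complement of $\Omega_n(G)$ is open, so $\Omega_n(G)$ is Borel. Because $\Omega(G)=\bigcup_{n\ge 1}\Omega_n(G)$ is an increasing union and $\mu(\Omega(G))>0$, continuity of $\mu$ from below yields some $n_0$ with $\mu(\Omega_{n_0}(G))>0$. Put $N=\operatorname{lcm}(1,2,\dots,n_0)$ (any common multiple, e.g. $n_0!$, works).

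Next I would prove that $g^N=\Id_F$ for every $g\in G$, which is precisely the assertion that $G$ is periodic of exponent dividing $N$. Fix $g\in G$ and $x\in\Omega_{n_0}(G)$. The cyclic orbit $\{x,g(x),g^2(x),\dots\}$ lies inside $\mathcal O_G(x)$, hence has at most $n_0$ elements, so $g^a(x)=g^b(x)$ for some $0\le a<b\le n_0$; applying $g^{-a}$ gives $g^{b-a}(x)=x$ with $1\le b-a\le n_0$, and therefore $g^N(x)=x$. Thus
\[
\Omega_{n_0}(G)\subseteq \operatorname{Fix}(g^N):=\{\,x\in F:\ g^N(x)=x\,\}.
\]

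Finally I would invoke analyticity. The self-map $g^N\colon F\to F$ is holomorphic, so in local holomorphic charts $\operatorname{Fix}(g^N)$ is the common zero set of finitely many holomorphic functions; hence it is a closed complex-analytic subset of the connected complex manifold $F$, and so it is either all of $F$ or a \emph{proper} analytic subset. A proper analytic subset of $F$ is $\mu$-negligible: in a chart it is locally contained in the zero set of a nonzero holomorphic function, which has Lebesgue measure zero, and a regular volume measure is absolutely continuous with respect to Lebesgue measure in charts. Since $\mu(\operatorname{Fix}(g^N))\ge\mu(\Omega_{n_0}(G))>0$, the proper case is impossible, so $\operatorname{Fix}(g^N)=F$, i.e. $g^N=\Id_F$. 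As $g\in G$ was arbitrary, $G$ has finite exponent $N$; in particular $G$ is periodic of bounded exponent, as claimed.

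I expect the substantive point to be the last step: the fact that a complex-analytic subset of a connected complex manifold which carries positive volume must be the whole manifold is exactly what turns ``many periodic orbits'' into ``uniform exponent''. The remaining difficulties are bookkeeping: ensuring the strata $\Omega_n(G)$ are measurable (handled by the closedness observation) and extracting a \emph{uniform} orbit bound from mere positivity of $\mu(\Omega(G))$ (handled by the monotone decomposition). The only hypothesis on $\mu$ actually used is that it gives positive mass to nonempty open sets and zero mass to locally Lebesgue-null sets, which is what ``regular volume measure'' should mean; I would state this property explicitly before using it.
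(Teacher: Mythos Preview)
Your proof is correct and follows essentially the same route as the paper: decompose $\Omega(G)$ as the increasing union of the sets $\Omega_n(G)=\{x:\#\mathcal O_G(x)\le n\}$, pick $n_0$ with $\mu(\Omega_{n_0}(G))>0$, and then use that a positive-measure analytic subset of a connected complex manifold must be the whole manifold to force each $g\in G$ to have finite order bounded in terms of $n_0$. The only differences are cosmetic: you verify explicitly that $\Omega_n(G)$ is closed and you pass directly to $N=\operatorname{lcm}(1,\dots,n_0)$ to obtain a single uniform exponent, whereas the paper extracts for each $f$ some $k_f\le n_0$ with $\mu(\{f^{k_f}=\Id\})>0$ and concludes the same way.
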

\begin{proof}

We have $\Omega(G)=\{x \in F: \# \mathcal O_G(x)< \infty\}=
\bigcup\limits_{k=1} ^\infty \{x \in F: \# \mathcal O_G(x)\leq
k\}$, therefore there is some $k\in \mathbb N$ such that
\[
\mu(\{x \in F: \# \mathcal O_G(x) \leq k\})>0.
\]
In particular, given any diffeomorphism $f \in G$ we have
\[
\mu(\{x \in F: \# \mathcal O_f(x) \leq k\})>0.
\]
In particular, there is $k_f \leq k$ such that the set $X=\{x \in F:
f^{k_f}(x)=x\}$ has positive measure. Since $X\subset F$ is an
analytic subset, this implies that $X=F$ (a proper analytic subset
of a connected complex  manifold has zero measure). Therefore, we
have $f^{k_f}=\Id$ in $F$. This shows that $G$ is periodic of
finite exponent.
\end{proof}

\begin{proof}[Proof of Theorem~\ref{Theorem:measuremainfoliations}]
Fix a base point $b \in B$. Denote by $\mu$ the regular volume measure on
the  fiber $F_b$ induced by the measure
 $\tilde \mu$. By Proposition~\ref{Proposition:globalholonomyisotropy}
 the compact leaves correspond
  to periodic orbits of the global holonomy $\Hol(\fa, F_b)$. Therefore,
  by the hypothesis the global holonomy $G=\Hol(\fa, F_b)$ and the measure
  $\mu$ satisfy the hypothesis of
Lemma~\ref{Lemma:measuregroup}.
  By this lemma the global holonomy is periodic of finite exponent. Since
  this group has some periodic orbit, by the Finiteness lemma
  (Proposition~\ref{Proposition:finitenesslemma}) the global holonomy group is finite.
  By Proposition~\ref{Proposition:globalholonomyisotropy} (3)
  the foliation is a Seifert fibration.
\end{proof}

The construction of the suspension of a group action gives
Theorem~\ref{Theorem:maingroups} from
Theorem~\ref{Theorem:measuremainfoliations}.

\begin{proof}[Proof of Theorem~\ref{Theorem:maingroups}]
Since $G$ is finitely generated, there are a compact connected
manifold $B$ and a representation $\vr \colon \pi_1(B) \to \Diff(F)$
such that the image $\vr(\pi_1(B))=G$. The manifold $B$ is not
necessarily a complex manifold, but this makes no difference in our
argumentation based only on the fact that the foliation is
transversely holomorphic. Denote by $\fa$ the suspension foliation
of the fibre bundle $\pi\colon E \to B$ with fiber $F$ which has
global holonomy conjugate to $G$. The periodic orbits of $G$ in $F$
correspond in a natural way to the
 leaves of $\fa$ which have finite order with respect to the
fibration $\pi \colon E \to B$, i.e., the leaves which  intersect the
fibers of $\pi \colon E \to B$ only at a finite number of points. Thus,
because the basis is compact,
each such leaf (corresponding to a finite orbit of $G$) is compact.
The measure $\mu$ induces a natural regular volume  measure $\tilde\mu$ on $E$.
Therefore by the hypothesis, we have
$\tilde\mu(\Omega(\fa))>0$. By Theorem~\ref{Theorem:measuremainfoliations}
the global holonomy $\Hol(\fa)$ is
finite. Thus the group $G$ is finite.
\end{proof}

\end{document}